\documentclass[12pt,reqno]{amsart}

\usepackage[x11names]{xcolor}
\usepackage[linkcolor=Blue2,citecolor=red,colorlinks]{hyperref}
\usepackage[capitalize,nameinlink]{cleveref}
\usepackage{color, bm, amscd, tikz-cd}

\newtheorem{thm}{Theorem}[section]

\newtheorem{corollary}[thm]{Corollary}
\newtheorem{lemma}[thm]{Lemma}

\theoremstyle{definition}

\numberwithin{equation}{section}

\begin{document}
	\title[Tetrablock]{A family of $2 \times 2$ tetrablock contractions with non-commuting fundamental operators}
	\author[M. Bhowmik]{Mainak Bhowmik}
	\address{Department of Mathematics\\
	Indian Institute of Technology Bombay\\ 
	 Powai, Mumbai, 400076, India}
	\email{mainak.bhowmik943@gmail.com, mainak@math.iitb.ac.in}

	\thanks{2020 {\em Mathematics Subject Classification}: 47A20, 47A13.\\
		{\em Keywords and phrases}:  Tetrablock, rational dilation, fundamental operators, complete spectral property}

	\maketitle
	\begin{abstract}
		In this note, we provide a family of $2\times 2$ tetrablock contractions that have tetrablock isometric dilation, but the corresponding fundamental operators do not commute. This answers a question raised by Bhattacharyya [Indiana Univ. Math. J. 2014] about the necessity of commuting fundamental operators for a tetrablock contraction to have rational dilation in a negative direction.
	\end{abstract}

	\section{Introduction \& Background}
	Let $K$ be a compact set in the $d$ dimensional complex Euclidean space $\mathbb{C}^d$. Let $\mathcal{R}(K)$ be the algebra of rational functions in $d$ variables with poles off $K$, endowed with the supremum norm given by $\|f\|_{\infty, K} = \{ |f(\boldsymbol{z})|: \boldsymbol{z} \in K \}$.
	
	We say that $K$ is a {\em spectral set} for a $d$-tuple $\boldsymbol{T} = \left(T_1,T_2,\dots,T_d \right)$ of commuting bounded operators on a Hilbert space $\mathcal{H}$, if the Taylor joint spectrum, $\sigma(\boldsymbol{T})\subseteq K $ and 
	\begin{align}   \label{Eq:Spectral}
	 \| f(\boldsymbol{T}) \| \leq \| f \|_{\infty, K} \ \ \text{for every } f \in \mathcal{R}(K),
	 \end{align}
	  where $f(\boldsymbol{T})$ is formed according to Taylor's functional calculus. In other words, the map, $\rho_{\boldsymbol{T}}: f \mapsto f(\boldsymbol{T})$ is a unital contractive algebra homomorphism from $\mathcal{R}(K)$ to $\mathbb{C}$.
	
	The set $K$ is said to be a \textit{complete spectral set} for $\boldsymbol{T}$ if for each $n\in \mathbb{N}$ and any $n\times n$ matrix-valued rational function $f= \left(\left( f_{jk}\right)\right)$ in $M_n\left(\mathcal{R}(K)\right) $,  we have 
	$$ \|f(\boldsymbol{T})\| \leq \| f \|_{\infty, K}=\sup\left\lbrace \| \left((f_{jk}(\boldsymbol{z}))\right)\|_{op} : \boldsymbol{z} \in K \right\rbrace, $$
	where $ f(\boldsymbol{T})= \left(\left( f_{jk}(\boldsymbol{T}) \right)\right) $ is a bounded operator from the direct sum of $n$ copies of $\mathcal{H}$ to itself i.e., the map
	$\operatorname{Id}_n \otimes \rho_{\boldsymbol{T}} :  f \mapsto f(\boldsymbol{T})$ from $M_n(\mathbb{C}) \otimes \mathcal{R}(K) $ to $M_n(\mathbb{C}) \otimes \mathcal{B}(\mathcal{H})$ is a contractive algebra homomorphism for every $n$, where $\mathcal{B}(\mathcal{H})$ is the $C^*$-algebra of bounded linear operators on $\mathcal{H}$. In such a case, we say that $\rho_{\boldsymbol{T}}$ is {\em completely contractive}.
	
	A $\partial K $-\textit{normal dilation} for a tuple $\boldsymbol{T}$ with $\sigma(\boldsymbol{T}) \subseteq K $ is a tuple of commuting bounded normal operators $\boldsymbol{N} = \left(N_1,N_2,\dots,N_d \right)$ on a Hilbert space $\mathcal{E}$ containing $\mathcal{H}$ as a closed subspace satisfying 
	\begin{align}\label{Eq:Dilation}
			f(\boldsymbol{T}) = \text{P}_{\mathcal{H}} f(\boldsymbol{N}) \vert_{\mathcal{H}},
	\end{align}
		for every $f \in \mathcal{R}(X)$ and $\sigma(\boldsymbol{N})\subseteq \partial K$, the Silov boundary of $K$ with respect to the uniform algebra $\mathcal{A}(K)$ which is the uniform closure in $C(K)$ of the functions holomorphic on a neighbourhood of $K$. Here, $\text{P}_\mathcal{H}$ is the orthogonal projection from $\mathcal{K}$ onto $\mathcal{H}$.
		
		 We say that rational dilation \textit{holds} on $K$ if $K$ is a complete spectral set for $\boldsymbol{T}$ whenever $K$ is a spectral set for $\boldsymbol{T}$. Else, rational dilation \textit{fails}.
		

	The above notions and terminologies arose after the pioneering works of von Neumann and Sz.-Nagy. In this modern language, von Neumann's inequality states that {\em every Hilbert space contraction has the open unit disk $\mathbb{D}$ as a spectral set}, and the Sz.-Nagy dilation theorem states that {\em every Hilbert space contraction $T$ has a $\partial \mathbb{D}$-normal dilation}. The following celebrated result of Arveson connects the notions of rational dilation with the complete spectral set.

	\begin{thm}[Arveson \cite{Arveson}]
		Let $\boldsymbol{T}$ have $K$ as a spectral set. Then $\boldsymbol{T}$ has $K$ as a complete spectral set if and only if $\boldsymbol{T}$ has a $\partial K$-normal dilation.
	\end{thm}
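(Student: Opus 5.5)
The plan is to prove both directions, routing everything through the representation $\rho_{\boldsymbol{T}}:\mathcal{R}(K)\to\mathcal{B}(\mathcal{H})$ and its behaviour on the Silov boundary $\partial K$.

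\emph{Easy direction (dilation implies complete spectral set).} Suppose $\boldsymbol{N}$ is a $\partial K$-normal dilation on $\mathcal{E}\supseteq\mathcal{H}$. Take $f=((f_{jk}))\in M_n(\mathcal{R}(K))$. Applying \eqref{Eq:Dilation} entrywise gives
\[
f(\boldsymbol{T})=(I_n\otimes P_{\mathcal{H}})\,f(\boldsymbol{N})|_{\mathcal{H}^n}.
\]
Hence $\|f(\boldsymbol{T})\|\le\|f(\boldsymbol{N})\|$. Since $\boldsymbol{N}$ is a commuting normal tuple, the spectral theorem gives a projection-valued measure $E$ supported on $\sigma(\boldsymbol{N})\subseteq\partial K$ with $f(\boldsymbol{N})=\int f\,dE$. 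Then $\|f(\boldsymbol{N})\|\le\sup_{\boldsymbol{z}\in\partial K}\|f(\boldsymbol{z})\|\le\|f\|_{\infty,K}$. This is a standard fact for $*$-homomorphisms of $C(\sigma(\boldsymbol{N}))\otimes M_n$, which are completely contractive.

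\emph{Hard direction (complete spectral set implies dilation).*} The first step is to transfer $\rho_{\boldsymbol{T}}$ to the boundary. By the maximum modulus principle for the uniform algebra $\mathcal{A}(K)$, the restriction map $\mathcal{R}(K)\to C(\partial K)$ is a complete isometry; at matrix level one applies the scalar principle to $\langle f(\boldsymbol{z})x,y\rangle$. So $\mathcal{R}(K)$ can be viewed as a unital subalgebra $\mathcal{A}\subseteq C(\partial K)$, and $\rho_{\boldsymbol{T}}$ is a unital completely contractive map on it. Next, extend $\rho$ to the operator system $\mathcal{S}=\mathcal{A}+\mathcal{A}^*$ by $\tilde\rho(f+\bar g)=\rho(f)+\rho(g)^*$. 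Arveson's/Paulsen's lemma shows that a unital completely contractive map on a unital subspace extends uniquely to a unital completely positive map on $\mathcal{S}$. Arveson's extension theorem then extends this to a unital completely positive map $\Phi:C(\partial K)\to\mathcal{B}(\mathcal{H})$. Stinespring's theorem gives a $*$-representation $\pi:C(\partial K)\to\mathcal{B}(\mathcal{E})$ and an isometry $V:\mathcal{H}\to\mathcal{E}$ with $\Phi=V^*\pi(\cdot)V$.

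To finish, set $N_i=\pi(z_i)$. These are commuting normal operators with $\sigma(\boldsymbol{N})\subseteq\partial K$, because $\pi$ is a unital $*$-homomorphism of $C(\partial K)$. For $f\in\mathcal{R}(K)$, $\pi(f)=f(\boldsymbol{N})$: it holds for polynomials, and for inverses of functions nonvanishing on $K$ since $\pi$ is a homomorphism. Identifying $\mathcal{H}$ with $V\mathcal{H}$ then yields \eqref{Eq:Dilation}.

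\emph{Main obstacle.} The delicate step is the passage from complete contractivity on $\mathcal{A}$ to complete positivity on $\mathcal{S}$. One needs the off-diagonal $2\times2$ trick: a unital map $\phi$ is completely contractive iff the map
\[
\begin{pmatrix}\lambda & a\\ b^* & \mu\end{pmatrix}\mapsto\begin{pmatrix}\lambda & \phi(a)\\ \phi(b)^* & \mu\end{pmatrix}
\]
is completely positive. One also needs well-definedness of $\tilde\rho$ when $\mathcal{A}\cap\mathcal{A}^*\neq\mathbb{C}$, which follows since unital contractions are self-adjoint on real parts. I would handle it by invoking this standard Paulsen argument, which relies on the complete spectral hypothesis and not just the spectral one; the remaining steps are direct citations of Arveson extension and Stinespring.
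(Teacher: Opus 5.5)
The paper gives no proof of this theorem. It is quoted from Arveson as background, and only the direction ``complete spectral set $\Rightarrow$ $\partial K$-normal dilation'' is used, to deduce Corollary~\ref{Cor:Main} from Agler's theorem. So there is nothing internal to compare against. Your argument is the standard proof and is correct in both directions. It runs: restriction to the Silov boundary is a complete isometry, the unital completely contractive $\rho_{\boldsymbol{T}}$ extends to a unital completely positive map on $\mathcal{A}+\mathcal{A}^*$, then Arveson's extension theorem, Stinespring's theorem, $N_i=\pi(z_i)$, and finally $\pi(f)=f(\boldsymbol{N})$ for $f\in\mathcal{R}(K)$.

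One small overstatement: the off-diagonal $2\times 2$ trick is not actually needed for the extension step. Fix $n$ and a state $s$ on $M_n(\mathcal{B}(\mathcal{H}))$. Then $s\circ\rho_n$ is a unital contractive functional on $M_n(\mathcal{A})$, and its Hahn--Banach extension is a state $\psi$ on $M_n(C(\partial K))$. Hence
$s\bigl(\rho_n(a)+\rho_n(b)^*\bigr)=\psi(a+b^*)$ for $a,b\in M_n(\mathcal{A})$. This identity gives both well-definedness and positivity of $\widetilde{\rho_n}=(\tilde\rho)_n$ on $M_n(\mathcal{A})+M_n(\mathcal{A})^*=M_n(\mathcal{S})$ directly.
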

	
	Like the open unit disk, the rational dilation also holds on the bidisk, the annulus, and the symmetrized bidisk. It fails on the polydisk $\overline{\mathbb{D}}^d$ for $d\ge 3$, on planar domains with two or more holes, and on any norm unit ball in $\mathbb{C}^d$ for $d\geq 3$. The search for a domain in dimension higher than two where rational dilation holds is still on.
	
		In a landmark work \cite{Agler-Invent}, Agler established a nice connection between complex geometry and operator theory. More precisely, he used the Carath\'eodory distance between two points $\boldsymbol{z}$ and $\boldsymbol{w}$ in a bounded domain $\Omega$ in $\mathbb{C}^d$ to compute the largest angle between the eigenfunctions of a commuting $d$-tuple $\boldsymbol{T}$ of $2\times 2$ matrices having joint spectrum in $\{\boldsymbol{z}, \boldsymbol{w}\}$ and satisfying \eqref{Eq:Spectral} with $K=\Omega$ and $f$ being holomorphic functions in $\Omega$. This led him to prove the following result related to the complete spectral set criterion. It is crucial for our present note.
		
		\begin{thm}[Agler \cite{Agler-Invent}] \label{Thm:Agler}
		Let $K$ be a compact pseudoconvex subset of $\mathbb{C}^d$.  A unital contractive algebra homomorphism $\Pi: \mathcal{A}(K) \rightarrow M_2(\mathbb{C})$ is completely contractive. 
	\end{thm}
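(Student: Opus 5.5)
This is Agler's theorem, and I would follow his route: reduce everything to Carath\'eodory extremal problems and then invoke the Pick interpolation theorem on the disk. First I reduce to the two structural cases for a unital homomorphism $\Pi:\mathcal{A}(K)\to M_2(\mathbb{C})$. Since $\Pi(\mathcal{A}(K))$ is a commutative subalgebra of $M_2(\mathbb{C})$ generated by $\Pi(z_1),\dots,\Pi(z_d)$, there are two possibilities. Either the tuple $\boldsymbol{T}=(\Pi(z_j))$ is simultaneously diagonalizable with joint eigenvalues $\boldsymbol{z}\neq\boldsymbol{w}$ in $K$. Or it has a single joint eigenvalue $\boldsymbol{z}$, in which case $\Pi(f)=f(\boldsymbol{z})I+\langle\nabla f(\boldsymbol{z}),v\rangle N$ for a fixed nilpotent $N$ and vector $v$. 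The degenerate subcase $\boldsymbol{T}$ scalar is trivially completely contractive. The nilpotent case is a limit of the diagonalizable one, obtained by taking $\boldsymbol{w}\to\boldsymbol{z}$ along $v$, so I will treat the two-point case and pass to the limit at the end.

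In the two-point case, after a similarity I write $\Pi(f)=S\,\mathrm{diag}(f(\boldsymbol{z}),f(\boldsymbol{w}))S^{-1}$. The only invariant of $\Pi$ is then the angle $\theta$ between the two eigenvectors. Contractivity of $\Pi$ means the following: for every scalar $f\in\mathcal{A}(K)$ with $\|f\|\le1$, the matrix with eigenvalues $f(\boldsymbol{z}),f(\boldsymbol{w})$ and eigenvector angle $\theta$ has norm at most $1$. A direct $2\times2$ computation turns this into
\[
\frac{|f(\boldsymbol{z})-f(\boldsymbol{w})|}{|1-\overline{f(\boldsymbol{w})}f(\boldsymbol{z})|}\le \text{(a quantity determined by }\theta).
\]
Taking the supremum over $f$, the condition says exactly that $\cos\theta$ is controlled by the Carath\'eodory pseudo-distance $c_K(\boldsymbol{z},\boldsymbol{w})=\sup_f \rho(f(\boldsymbol{z}),f(\boldsymbol{w}))$, where $\rho$ is the pseudo-hyperbolic distance on $\mathbb{D}$.

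For complete contractivity, take $F\in M_n(\mathcal{A}(K))$ with $\|F\|\le1$. I need the $2n\times2n$ matrix $\Pi^{(n)}(F)$ to be a contraction. Its norm depends only on $\theta$ and the pair of matrices $A=F(\boldsymbol{z})$, $B=F(\boldsymbol{w})$. Choose an extremal (or near-extremal) scalar $\phi\in\mathcal{A}(K)$ attaining $c_K(\boldsymbol{z},\boldsymbol{w})$, with $\phi(\boldsymbol{z})=\lambda$ and $\phi(\boldsymbol{w})=\mu$. I then want a matrix-valued $G\in H^\infty(\mathbb{D},M_n)$ with $\|G\|\le1$, $G(\lambda)=A$ and $G(\mu)=B$. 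Matrix Nevanlinna--Pick on $\mathbb{D}$ supplies such a $G$ exactly when the $2\times2$ block Pick matrix is positive. This is where the key input enters: every scalar function from $K$ to $\mathbb{D}$ factors, as far as two-point data are concerned, through $\phi$, and the Carath\'eodory distance is the largest possible.

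\textbf{Main obstacle.} The hard step is showing that the matrix Pick condition for $(\lambda,\mu;A,B)$ actually holds. I need matrix two-point data realized by a contractive $F$ on $K$ to satisfy $\rho$-type bounds no worse than $c_K$. I would do this by applying the scalar inequality to $\langle F(\cdot)x,y\rangle$ and to compositions of $F$ with matrix M\"obius maps. These yield the operator inequality $\bigl\|(I-BB^*)^{-1/2}(A-B)(I-A^*B)^{-1}\cdots\bigr\|\le c_K$, which is precisely the matrix Pick criterion. Pseudoconvexity is used here: it guarantees that Carath\'eodory extremals exist and that $c_K$ is achieved by holomorphic functions in $\mathcal{A}(K)$ (via approximation on neighbourhoods). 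Given $G$, we have $\Pi^{(n)}(F)=\Pi_\phi^{(n)}(G)$, where $\Pi_\phi$ is the induced homomorphism on the disk algebra at the points $\lambda,\mu$. By the choice of $\phi$ this is contractive, hence completely contractive by von Neumann and Sz.-Nagy dilation. That gives $\|\Pi^{(n)}(F)\|\le1$. Finally, the nilpotent case follows by letting $\boldsymbol{w}\to\boldsymbol{z}$ and using the infinitesimal Carath\'eodory metric.
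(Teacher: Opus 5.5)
The paper gives no proof of this theorem. It is quoted from Agler and used only to obtain Corollary~\ref{Cor:Main}, so your argument has to stand on its own.

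\textbf{What is sound.} Your two-point core works.
The scalar $2\times 2$ computation shows that $\Pi$ is contractive iff $c_K(\boldsymbol z,\boldsymbol w)\le\sin\theta$.
The M\"obius map is $M_A(B)=(I-AA^*)^{-1/2}(B-A)(I-A^*B)^{-1}(I-A^*A)^{1/2}$; your displayed version garbles the factors.
Composing $tF$ ($t<1$) with $M_{tA}$ and pairing with unit vectors gives $\|M_A(B)\|\le c_K(\boldsymbol z,\boldsymbol w)$.

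\textbf{The gap: the nilpotent case.} You propose to obtain it by letting $\boldsymbol w_k=\boldsymbol z+t_kv\to\boldsymbol z$.
For that you need two-point homomorphisms $\Pi_k\to\Pi$ that are themselves contractive, since that is the hypothesis of your two-point result.
That is, you need $c_K(\boldsymbol z,\boldsymbol w_k)\le\sin\theta_k$, and convergence to $\Pi$ forces $\sin\theta_k\sim t_k/\|N\|$.
Contractivity of $\Pi$ gives only the first-order bound $\|N\|\,\gamma_K(\boldsymbol z;v)\le 1$ on the infinitesimal Carath\'eodory metric.
So when $\Pi$ is extremal, the finite-$k$ inequalities hinge on second-order terms you do not control.
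Making the limit work would require three things:
\begin{enumerate}
\item a rescaling $N\mapsto sN$ with $s<1$;
\item a uniform estimate $c_K(\boldsymbol z,\boldsymbol z+tv)\le t\gamma_K(\boldsymbol z;v)+o(t)$, which comes from Cauchy estimates and so needs $\boldsymbol z$ interior;
\item $\boldsymbol z+tv\in K$.
\end{enumerate}
The last two can fail for $\boldsymbol z\in\partial K$.

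\textbf{A direct fix.} Run your M\"obius argument on the derivation instead of on a second point.
Write $\Pi(f)=\chi(f)I+\delta(f)N$, where $\chi$ is a character, $\delta$ is a bounded point derivation at $\chi$, and $N=\left[\begin{smallmatrix}0&\nu\\0&0\end{smallmatrix}\right]$ in a suitable orthonormal basis.
Contractivity means $\nu|\delta(f)|\le 1-|\chi(f)|^2$ for $\|f\|\le1$.
Also $\Pi^{(n)}(F)\cong\left[\begin{smallmatrix}A&\nu D\\0&A\end{smallmatrix}\right]$ with $A=\chi^{(n)}(F)$ and $D=\delta^{(n)}(F)$.
For $\|A\|<1$ this is a contraction iff $\nu\|(I-AA^*)^{-1/2}D(I-A^*A)^{-1/2}\|\le1$.
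Since $\chi^{(n)}$ is multiplicative and $\delta^{(n)}$ obeys the Leibniz rule, $\chi^{(n)}(M_A(F))=0$ and $\delta^{(n)}(M_A(F))=(I-AA^*)^{-1/2}D(I-A^*A)^{-1/2}$.
Applying the scalar bound to $\langle M_A(F)\xi,\eta\rangle$ gives exactly the required inequality; replace $F$ by $tF$ if $\|A\|=1$.

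\textbf{Two smaller points.}
\begin{enumerate}
\item Your claim that pseudoconvexity yields Carath\'eodory extremals in $\mathcal{A}(K)$ is unjustified. Normal families give limits only in $H^\infty$ of the interior, and nothing at points of $\partial K$. But $\phi$ is unnecessary, because only the number $c_K$ enters.
  \begin{itemize}
  \item You can take $\lambda=0$ and $\mu=c_K(\boldsymbol z,\boldsymbol w)$.
  \item You can also skip Nevanlinna--Pick and Sz.-Nagy altogether. $\Pi^{(n)}(F)$ is a contraction iff $\left[\begin{smallmatrix}I-A^*A&\bar c(I-A^*B)\\ c(I-B^*A)&I-B^*B\end{smallmatrix}\right]\ge 0$ with $|c|=\cos\theta$. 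By a Schur complement this follows from $\|M_A(B)\|\le\sin\theta$.
  \end{itemize}
\item A hypothesis on $K$ genuinely enters at one place: identifying characters of $\mathcal{A}(K)$ with points of $K$, and bounded point derivations with directional derivatives. You use this without comment. Phrasing everything via $\chi_1,\chi_2$ or $(\chi,\delta)$ avoids it.
\end{enumerate}
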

	
	In the backdrop of this, we consider the {tetrablock}, a bounded domain that arose in the study of $\mu$-synthesis problem, defined as
	$$\mathbb{E} = \left \lbrace (a_{11}, a_{22}, \det A) \in \mathbb{C}^3 :  A= \left(\begin{array}{cc}
		a_{11} &  a_{12} \\
		a_{21} & a_{22} \\
	\end{array}\right) \in M_2(\mathbb{C}) \text{ with } \|A\|_{op} < 1 \right \rbrace \subset \mathbb{C}^3.
	$$ 
It is a bounded $(1, 0, 1)$-balanced and $(0, 1, 1)$-balanced pseudoconvex domain \cite{EKZ}. If we consider $\|A\|_{op} \leq 1$ in the above description, we obtain the closed tetrablock $\overline{\mathbb{E}}$, which is a polynomially convex compact set and hence, it is a pseudoconvex compact set (or, Stein compactum). There has been a fascinating history of the theory of commuting triples $(T_1, T_2, T_3)$ of operators with $\overline{\mathbb{E}}$ as a spectral set. Such a triple is called a \textit{tetrablock contraction} or an {\em $\overline{\mathbb{E}}$-contraction}. In order to show a triple to be a $\overline{\mathbb{E}}$-contraction, it is enough to check the \eqref{Eq:Spectral} for $f$ in the polynomial algebra $\mathbb{C}[z_1, z_2, z_3]$ instead of the rational algebra $\mathcal{R}(\overline{\mathbb{E}})$ as the closed tetrablock is polynomially convex.
A special class of tetrablock contractions was dilated in \cite{Bhattacharyya-IUMJ}.
	{\em The rational dilation problem on the tetrablock is still unresolved \cite{BS-JFA}.}
	
	In the context of the tetrablock, a boundary normal dilation consists of a triple $(N_1, N_2, N_3)$ of commuting normal operators with joint spectrum contained in the distinguished boundary
	$$ b\mathbb E =\left\lbrace (z_1, z_2, z_3)\in \overline{\mathbb{E}} : |z_3|=1 \right\rbrace.$$
	Such a normal triple is called a {\em tetrablock unitary}. A  {\em tetrablock isometry} is the restriction of a tetrablock unitary to a joint invariant subspace. Although the notion of rational dilation demands that given a tetrablock contraction $(T_1, T_2, T_3)$, one must have a normal triple as above, it is not hard to see that finding a Hilbert space $\mathcal E$ containing $\mathcal H$ and a tetrablock isometry $(V_1, V_2, V_3)$ on $\mathcal E$ such that
	$$V_1^*|_{\mathcal H} = T_1^*, V_2^*|_{\mathcal H} = T_2^* \text{ and } V_3^*|_{\mathcal H} = T_3^*,$$
	is enough.
	For more details, please refer to \cite{Bhattacharyya-IUMJ}. 
	Thus, a direct consequence of Theorem \ref{Thm:Agler} is the following.
	\begin{corollary} \label{Cor:Main}
		Every $2 \times 2$ tetrablock contraction has a tetrablock unitary dilation.
	\end{corollary}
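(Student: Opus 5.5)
The plan is to combine Agler's theorem (Theorem \ref{Thm:Agler}) with Arveson's theorem. Let $\boldsymbol{T}=(T_1,T_2,T_3)$ be a commuting triple of $2\times 2$ matrices with $\overline{\mathbb{E}}$ as a spectral set. Set $K=\overline{\mathbb{E}}$ and consider the map
$$\Pi:\mathcal{A}(\overline{\mathbb{E}})\to M_2(\mathbb{C}),\qquad \Pi(f)=f(\boldsymbol{T}).$$

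\textbf{Step 1: $\Pi$ is a well-defined unital contractive homomorphism.} The spectrum $\sigma(\boldsymbol{T})$ is a finite set of joint eigenvalues lying in $\overline{\mathbb{E}}$. For $f$ holomorphic near $\overline{\mathbb{E}}$, the Taylor calculus gives $f(\boldsymbol{T})$, and it depends only on finitely many jets of $f$ at $\sigma(\boldsymbol{T})$. Since $\overline{\mathbb{E}}$ is polynomially convex, the Oka--Weil theorem lets us approximate such $f$ uniformly on a neighbourhood of $\overline{\mathbb{E}}$ by polynomials. Cauchy estimates then give convergence of the relevant jets, so $\|f(\boldsymbol{T})\|\le\|f\|_{\infty,\overline{\mathbb{E}}}$ follows from the polynomial inequality. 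By density, $\Pi$ extends contractively to all of $\mathcal{A}(\overline{\mathbb{E}})$. It is multiplicative and unital, because the functional calculus is.

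\textbf{Step 2: apply Agler.} The set $\overline{\mathbb{E}}$ is a compact pseudoconvex set (a Stein compactum, being polynomially convex), as noted in the text. Theorem \ref{Thm:Agler} therefore shows that $\Pi$ is completely contractive. Using the same Oka--Weil approximation entrywise, this extends to matrix-valued rational functions with poles off $\overline{\mathbb{E}}$. Hence $\overline{\mathbb{E}}$ is a complete spectral set for $\boldsymbol{T}$.

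\textbf{Step 3: apply Arveson.} Arveson's theorem now gives a $\partial\overline{\mathbb{E}}$-normal dilation $\boldsymbol{N}$ of $\boldsymbol{T}$, where $\partial\overline{\mathbb{E}}$ denotes the Shilov boundary of $\mathcal{A}(\overline{\mathbb{E}})$. The Shilov boundary of the tetrablock is the distinguished boundary $b\mathbb{E}$ (a known fact from \cite{EKZ}/\cite{Bhattacharyya-IUMJ}). So $\boldsymbol{N}$ is a commuting normal triple with spectrum in $b\mathbb{E}$, that is, a tetrablock unitary dilating $\boldsymbol{T}$.

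The main obstacle is the bookkeeping in Step 1: passing from the polynomial spectral-set condition to contractivity on $\mathcal{A}(\overline{\mathbb{E}})$, together with the matching step for matrix-valued rational functions in Step 2. In finite dimensions this is handled by the jet argument above. Identifying the Shilov boundary with $b\mathbb{E}$ is quoted from the literature rather than reproved.
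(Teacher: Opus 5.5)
Your proposal is correct and follows the paper's route: Agler's theorem makes the $2\times 2$ representation $f\mapsto f(\boldsymbol{T})$ of $\mathcal{A}(\overline{\mathbb{E}})$ completely contractive, and Arveson's theorem then gives a normal dilation with spectrum in the Shilov boundary, which is $b\mathbb{E}$, i.e.\ a tetrablock unitary. Your Step 1 (Oka--Weil on a polynomially convex neighbourhood plus Cauchy estimates on jets) and the Shilov-boundary identification in Step 3 simply make explicit the bookkeeping that the paper compresses into the phrase ``a direct consequence of Theorem~\ref{Thm:Agler}.''
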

	
	\newpage
	
	Now we shall define the key protagonists of this note.
	\subsection*{The Fundamental operators} Let $(T_1, T_2, T_3)$ be a tetrablock contraction on a Hilbert space $\mathcal{H}$. It has been shown in \cite[Theorem 3.4] {Bhattacharyya-IUMJ} that there exist unique bounded linear operators $F_1$ and $F_2$ on $\mathcal{D}_{T_3} = \overline{\operatorname{Ran}} D_{T_3}$ such that 
	\begin{align}
		T_1-T_2^*T_3 =D_{T_3} F_1 D_{T_3} \ \ \text{and} \ \ T_2-T_1^*T_3 = D_{T_3} F_2 D_{T_3},
	\end{align}
	where $D_{T_3} = \left(I- T_3^*T_3\right)^{1/2}$. These two operators are called the {\em fundamental operators} for the tetrablock contraction $(T_1, T_2, T_3)$. 
	
	\begin{thm}[Bhattacharyya \cite{Bhattacharyya-IUMJ}] \label{Thm: B}
		Let $(T_1, T_2, T_3)$ be a tetrablock contraction on a Hilbert space $\mathcal{H}$ with the fundamental operators $F_1$ and $F_2$. Define three bounded linear operators $V_1, V_2,$ and $V_3$ on $\mathcal{K}= \mathcal{H} \oplus \ell^2(\mathcal{D}_{T_3})$ as follows.
		\begin{align}\label{Eq:Dil-tuple}
			V_j= \begin{bmatrix}
				T_j & 0 \\ C_j & T_{\varphi_j}
			\end{bmatrix}: \mathcal{H} \oplus \ell^2(\mathcal{D}_{T_3}) \rightarrow  \mathcal{H} \oplus \ell^2(\mathcal{D}_{T_3}) , \ \text{for } j=1, 2, 3,
				\end{align}
				where $T_{\varphi_1}, T_{\varphi_2}$, and $T_{\varphi_3}$ are the Topelitz operators on $\ell^2(\mathcal{D}_{T_3})$ associated with the symbols $\varphi_1(z)=F_1 + F_2^*z $, $\varphi_2(z)=F_2 + F_1^*z$, and $\varphi_3(z)=I_{\mathcal{D}_{T_3}}z$, respectively. Here, $C_1= (F_2^*D_{T_3}, 0, \dots)^t$, $C_2= (F_1^*D_{T_3}, 0, \dots)^t$, and $C_3=(0,  D_{T_3}, \dots)^t$  are the column operators. Then we have the following:
		\begin{enumerate}
			\item The triple $(V_1, V_2, V_3)$ is a tetrablock isometric dilation of $(T_1, T_2, T_3)$ if $[F_1, F_2]=0$ and $[F_1, F_1^*] = [F_2, F_2^*]$.
			\item If there is a tetrablock isometric dilation $(W_1, W_2, W_3)$ of $(T_1, T_2, T_3)$ such that $W_3$ is the minimal isometric dilation of $T_3$, then $(W_1, W_2, W_3)$ is unitarily equivalent to $(V_1, V_2, V_3)$. Moreover, $[F_1, F_2]=0$ and $[F_1, F_1^*] = [F_2, F_2^*]$.
		\end{enumerate}
	\end{thm}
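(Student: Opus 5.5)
The plan is to use the standard characterization of tetrablock isometries: a commuting triple $(V_1,V_2,V_3)$ is a tetrablock isometry if and only if $V_3$ is an isometry, $V_1 = V_2^*V_3$, and $\|V_2\|\le 1$ (equivalently, with the roles of $V_1$ and $V_2$ swapped). I take this characterization from the earlier tetrablock literature rather than reprove it. With it, both parts of Theorem \ref{Thm: B} reduce to block-matrix computations on $\mathcal{H}\oplus \ell^2(\mathcal{D}_{T_3})$.

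\emph{Part (1).} Since each $V_j$ is lower triangular with $(1,1)$ entry $T_j$, the space $\mathcal{H}$ is co-invariant and $V_j^*|_{\mathcal{H}} = T_j^*$. It therefore suffices to show that $(V_1,V_2,V_3)$ is a tetrablock isometry, and I would proceed in four steps.
\begin{enumerate}
\item $V_3$ is the Sch\"affer minimal isometric dilation of $T_3$, so it is an isometry.
\item The identity $V_1 = V_2^*V_3$ is checked block by block.
\begin{itemize}
\item The $\mathcal{H}$-corner gives $T_1 = T_2^*T_3 + F_1^{*\,*}\ldots$; precisely, it is $T_2^*T_3 + D_{T_3}F_1D_{T_3}$, which is the defining relation of $F_1$.
\item On the $\ell^2$ part it is the symbol identity $\varphi_1(z) = \varphi_2(z)^* z$ on the circle, which holds since $(F_2+F_1^*z)^*z = F_2^*z + F_1$.
\item The off-diagonal term uses $C_2^*$ and $C_3$ and matches $C_1$.
\end{itemize}
\item The contractivity $\|V_2\|\le 1$ reduces to $\|\varphi_2\|_\infty\le 1$ together with the fact that $\|F_1 + F_2^* z\|\le 1$. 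The latter is the numerical-radius-type bound on fundamental operators proved in \cite{Bhattacharyya-IUMJ} alongside their existence. It is combined with a positivity computation $I - V_2^*V_2 \ge 0$ using $D_{T_3}^2 = I - T_3^*T_3$.
\item Commutativity is where the hypotheses enter. The $\ell^2$ blocks commute iff the symbols commute. Expanding,
\[
\varphi_1\varphi_2 = F_1F_2 + (F_1F_1^* + F_2^*F_2)z + F_2^*F_1^* z^2,
\qquad
\varphi_2\varphi_1 = F_2F_1 + (F_2F_2^* + F_1^*F_1)z + F_1^*F_2^* z^2 .
\]
So $[F_1,F_2]=0$ and $[F_1,F_1^*]=[F_2,F_2^*]$ are exactly what is needed. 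The commutation of $T_{\varphi_j}$ with $T_{\varphi_3}$ is automatic. The off-diagonal blocks require identities such as $C_1T_2 + T_{\varphi_1}C_2 = C_2T_1 + T_{\varphi_2}C_1$, and I would verify these using the defining equations of $F_1, F_2$ and the relation $T_3D_{T_3} = D_{T_3^*}T_3$.
\end{enumerate}

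\emph{Part (2).} Since $W_3$ is a minimal isometric dilation of $T_3$, after a unitary identification we may take $W_3 = V_3$ on $\mathcal{H}\oplus\ell^2(\mathcal{D}_{T_3})$. Co-invariance of $\mathcal{H}$ forces $W_j = \begin{bmatrix} T_j & 0\\ X_j & Y_j\end{bmatrix}$, and each $Y_j$ commutes with the shift $T_{\varphi_3}$, hence is a Toeplitz (analytic) operator $T_{\psi_j}$. The relations $W_1 = W_2^*W_3$ and $W_2 = W_1^*W_3$ then force $\psi_1 = G_1 + G_2^* z$ and $\psi_2 = G_2 + G_1^* z$ for some $G_1, G_2$. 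Comparing the $(2,1)$ and $(1,1)$ entries of $W_1 = W_2^*W_3$ gives $D_{T_3}G_1D_{T_3} = T_1 - T_2^*T_3$, and likewise for $G_2$. Uniqueness of fundamental operators then gives $G_j = F_j$ and $X_j = C_j$, so $W_j = V_j$. Finally, commutativity of $W_1$ and $W_2$ gives, through the symbol expansion above, $[F_1,F_2]=0$ and $[F_1,F_1^*]=[F_2,F_2^*]$.

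The main obstacle I expect is in part (2): showing that $Y_j$ has exactly linear symbol and pinning down $X_j$ from the operator identities alone. I would first exploit $W_1^*W_3 = W_2$ with $W_3$ a pure shift on the $\ell^2$ part, so that the degree-$\ge 2$ Fourier coefficients are forced to vanish.
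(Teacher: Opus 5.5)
The paper gives no proof of this theorem; it is quoted from Bhattacharyya, so your sketch has to stand on its own. Your overall scheme is the natural one, and Part (2) is essentially sound. The step you flag as the main obstacle is in fact routine: the $(2,2)$ block of $W_1=W_2^*W_3$ reads $T_{\psi_1}=T_{\psi_2}^*T_z$, and comparing matrix entries kills every Fourier coefficient of $\psi_2$ of order $\ge 2$. Your Steps 1 and 2 also silently use the correct Sch\"affer column $C_3=(D_{T_3},0,\dots)^t$. With the printed $(0,D_{T_3},\dots)^t$, $V_3$ is not an isometry and $T_{\varphi_2}^*C_3\neq C_1$. The genuine gap is Step 3 of Part (1), the contractivity of $V_2$.

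First, the bound available for fundamental operators is a numerical-radius bound, $w(F_1+zF_2)\le 1$ for $z\in\mathbb{T}$ (it is the fundamental operator of the $\Gamma$-contraction $(T_1+zT_2,zT_3)$). That yields only $\|F_1+zF_2\|\le 2$, and nothing directly about $F_1+F_2^*z$. The bound $\sup_{\mathbb{T}}\|\varphi_2\|\le 1$ does hold, but for a reason you do not give. Since $\varphi_1(z)=z\varphi_2(z)^*$ on $\mathbb{T}$, your Step 4 expansion says exactly that the two commutator hypotheses hold iff $\varphi_2(z)$ is normal for $|z|=1$. Hence $\|\varphi_2(z)\|=w(F_2+zF_1^*)\le\sup_{\|h\|=1}\bigl(|\langle F_1h,h\rangle|+|\langle F_2h,h\rangle|\bigr)=\sup_{z\in\mathbb{T}}w(F_1+zF_2)\le 1$.

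Second, and more seriously, $\|T_{\varphi_2}\|\le 1$ only controls $V_2$ on $\ell^2(\mathcal{D}_{T_3})$. Your unspecified ``positivity computation'' must handle the coupling with $\mathcal{H}$. Write $V_2(h,x)=(T_2h,\,S^*T_{\varphi_2}y)$ with $y=(D_{T_3}h,x_0,x_1,\dots)$ and $S$ the shift. Then $\|V_2(h,x)\|^2=\|T_2h\|^2-\|F_2D_{T_3}h\|^2+\|T_{\varphi_2}y\|^2$ and $\|(h,x)\|^2=\|T_3h\|^2+\|y\|^2$. So bounding the Toeplitz part by $\|T_{\varphi_2}\|\le 1$ would require $T_2^*T_2\le T_3^*T_3+D_{T_3}F_2^*F_2D_{T_3}$. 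This already fails for the scalar tetrablock contraction $(\tfrac12,\tfrac12,\tfrac14)$, where $F_2=\tfrac25$ and $\tfrac14>\tfrac1{16}+\tfrac{3}{20}$. So that route is blocked.

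One way to close the gap runs as follows.
\begin{itemize}
\item Since $\ell^2(\mathcal{D}_{T_3})$ is $V_2$-invariant, $r(V_2)\le\max\{r(T_2),r(T_{\varphi_2})\}\le 1$.
\item The Wold decomposition of $V_3$ reduces $V_1$ and $V_2$, because $V_2^*V_3=V_1$ and $V_1^*V_3=V_2$.
\item On the unitary part, $V_2$ is normal, so its norm equals its spectral radius.
\item On the shift part, $V_2=T_\psi$ with $\psi$ linear and normal-valued on $\mathbb{T}$. A Szeg\H{o}-kernel approximate-eigenvector argument then gives $\|T_\psi\|=\sup_{\mathbb{T}}r(\psi)\le r(T_\psi)\le 1$.
\end{itemize}

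A smaller point in Step 4 concerns the off-diagonal blocks. Those of $V_jV_3=V_3V_j$ (which is not automatic) and of $V_1V_2=V_2V_1$ reduce to $D_{T_3}T_1=F_1D_{T_3}+F_2^*D_{T_3}T_3$ and its twin with $1,2$ swapped, together with $[F_1,F_2]=0$. These identities follow by multiplying by $D_{T_3}$, using the defining equations and $T_1T_3=T_3T_1$, and then invoking injectivity of $D_{T_3}$ on $\mathcal{D}_{T_3}$. The relation $T_3D_{T_3}=D_{T_3^*}T_3$ that you cite is not what is needed.
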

	
	This constructive dilation of the tetrablock contraction $(T_1, T_2, T_3)$ on {\em the minimal isometric dilation space of $T_3$} under the above conditions on the fundamental operators motivates Bhattacharyya to pose the following question.
	\vspace{2mm}
	
	\textbf{Question.} {\em Are the conditions $[F_1, F_2]=0$ and $[F_1, F_1^*] = [F_2, F_2^*]$ necessary to have tetrablock isometric dilation of $(T_1, T_2, T_3)$?}
	
	\vspace{2mm}
	
It has been shown by explicit examples in \cite[Example 3.4]{BS-JFA} and \cite[Example 2.1]{BB} that the condition $[F_1, F_1^*] = [F_2, F_2^*]$ is not necessary to have tetrablock isometric dilation of a tetrablock contraction $(T_1, T_2, T_3)$. In fact, an explicit tetrablock isometric dilation was found in \cite[Section 2]{BB} such that the dilation space is not the minimal isometric dilation of $T_3$.  Thus, it remains to investigate whether the commutativity of the fundamental operators is necessary.
In the next section, we provide a family of $2 \times 2$ tetrablock contractions that have tetrablock unitary dilations by Corollary \ref{Cor:Main}, but the corresponding fundamental operators do not commute. En route, we show how the Schwarz-type lemma for the polydisk can be used in proving a commuting triple of $2 \times 2$ upper triangular matrices to be a tetrablock contraction - a difficult task, in general.

	\newpage

\section{Main construction}
	We start with a commuting triple, $(T_1, T_2, T_3)$ of $2 \times 2$ upper triangular matrices as follows:
	\begin{align*}
		T_j = \begin{bmatrix}
			0 & \lambda_j \\ 0 & 0
			\end{bmatrix} \ \text{ for } j=1, 2, 3.
	\end{align*}

	The following lemmas are crucial for producing the required tetrablock contractions.
	\begin{lemma} \label{Lem: Mainak}
		There exists $r \in (0, 1)$ such that the polydisk 
		$r \overline{\mathbb{D}^3}= \{(rz_1, rz_2, rz_3): (z_1, z_2, z_3) \in \mathbb{D}^3 \}$ is contained in $ \mathbb{E}$.
	\end{lemma}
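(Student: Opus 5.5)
Given $(z_1,z_2,z_3)\in\mathbb{D}^3$, I will produce an explicit matrix $A$ with $a_{11}=rz_1$, $a_{22}=rz_2$, $\det A = rz_3$ and $\|A\|_{op}<1$; by the definition of $\mathbb{E}$ this places $(rz_1,rz_2,rz_3)$ in $\mathbb{E}$. The natural candidate is
\[
A=\begin{bmatrix} rz_1 & b \\ c & rz_2\end{bmatrix},\qquad bc = r^2 z_1z_2 - rz_3 ,
\]
with the off-diagonal entries split symmetrically. Writing $w = r^2z_1z_2 - rz_3$, choose $b$ to be any square root of $-w$ and set $c=-b$. Then $bc = w$, so $\det A = r^2z_1z_2 - bc = rz_3$, and $|b|=|c|=|w|^{1/2}\le (r^2+r)^{1/2}$.

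Next, bound the operator norm crudely by the Hilbert--Schmidt norm:
\[
\|A\|_{op}^2 \le \|A\|_{HS}^2 = r^2|z_1|^2 + r^2|z_2|^2 + 2|w| < 2r^2 + 2(r^2+r) = 4r^2+2r .
\]
Any $r\in(0,1)$ with $4r^2+2r\le 1$, for instance $r=1/4$ (which gives $1/4+1/2=3/4<1$), then yields $\|A\|_{op}<1$. Hence $rz\in\mathbb{E}$ for every $z\in\mathbb{D}^3$.

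The statement writes $r\overline{\mathbb{D}^3}$ but defines it using $z\in\mathbb{D}^3$. To cover the closed polydisk as well, I will take $|z_j|\le 1$ and use the same estimate: the Hilbert--Schmidt bound is then at most $4r^2+2r$, which is strictly below $1$ once $r=1/4$. So the closed scaled polydisk also lies in the open tetrablock.

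I expect no real obstacle here. The only points needing care are the choice of square root, which is harmless, and making the inequality strict, which the slack in the choice $r=1/4$ handles. A sharper $r$ could be obtained from the known characterization $|z_1-\bar z_2 z_3|+|z_1z_2-z_3|<1-|z_2|^2$ of $\mathbb{E}$, but existence is all the lemma requires.
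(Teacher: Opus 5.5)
Your proof is correct, but it takes a different route from the paper. The paper's proof is a one-line soft argument. It uses that $\mathbb{E}$ is a domain, i.e.\ open (taken from the literature), and that it contains the origin (take $A=0$). Hence some closed polydisk $r\overline{\mathbb{D}^3}$ centred at $0$ fits inside it.

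You instead check membership directly from the definition of $\mathbb{E}$. For each $z$ with $|z_j|\le 1$ you exhibit the matrix $A$ with diagonal $(rz_1,rz_2)$ and off-diagonal entries $b,-b$, where $b^2=-(r^2z_1z_2-rz_3)$. You then bound $\|A\|_{op}$ by the Hilbert--Schmidt norm. The arithmetic checks out: $\|A\|_{HS}^2\le 4r^2+2r=3/4$ at $r=1/4$.

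What the paper's route gains is brevity. The cost is that it leans on the openness of $\mathbb{E}$, which is not quite immediate from the definition. The map $A\mapsto(a_{11},a_{22},\det A)$ has complex rank $2$ at $A=0$, so showing that $0$ is interior requires either passing through a different preimage of the origin (e.g.\ a nonzero strictly upper triangular $A$) or citing the known result.

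Your route is self-contained. It also treats the closed polydisk explicitly, which matters because the statement as written conflates $\overline{\mathbb{D}^3}$ and $\mathbb{D}^3$. Finally, it produces a concrete constant, so Theorem~\ref{Thm: Mainak} becomes an explicit family: $0<|\lambda_j|<1/4$ with $|\lambda_1|\neq|\lambda_2|$.
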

	\begin{proof}
		Since $(0, 0, 0)$ is an interior point of $\mathbb{E}$, we can find $0<r<1$ such that the polydisk 	$r \overline{\mathbb{D}^3} \subset \mathbb{E}$.
	\end{proof}
	
We state the following lemma due to Knese. 
	\begin{lemma}[Knese \cite{Knese-PAMS}] \label{Lem: Knese}
		If $g: \mathbb{D}^d \rightarrow \overline{\mathbb{D}}$ is holomorphic, then 
		\begin{align*}
			\sum_{j=1}^d (1-|z_j|^2) \left| \frac{\partial g}{\partial z_j}(z_1, \dots, z_d) \right| \leq 1- |g(z_1, \dots, z_d)|^2.
		\end{align*}
	\end{lemma}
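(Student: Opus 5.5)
This is the Schwarz–Pick type inequality of Knese for holomorphic self-maps of the polydisk into the closed disk. I would prove it by reducing it, via automorphisms and a one-variable slice, to the classical Schwarz lemma at the origin.

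\emph{Step 1: reduce to $|g|<1$.} If $|g(z^0)|=1$ at some interior point $z^0$, then $g$ is constant by the maximum principle. All partial derivatives then vanish and the inequality is trivial. So we may assume $g:\mathbb{D}^d\to\mathbb{D}$.

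\emph{Step 2: move the base point to the origin.} Fix $z\in\mathbb{D}^d$. Let $\phi_j(w)=(w+z_j)/(1+\bar z_j w)$ be the disk automorphism with $\phi_j(0)=z_j$ and $\phi_j'(0)=1-|z_j|^2$. Let $\psi(u)=(u-g(z))/(1-\overline{g(z)}u)$, so that $\psi'(g(z))=1/(1-|g(z)|^2)$. Define
\[
h(w)=\psi\bigl(g(\phi_1(w_1),\dots,\phi_d(w_d))\bigr).
\]
Then $h:\mathbb{D}^d\to\mathbb{D}$ is holomorphic, $h(0)=0$, and by the chain rule
\[
\frac{\partial h}{\partial w_j}(0)=\frac{(1-|z_j|^2)\,\frac{\partial g}{\partial z_j}(z)}{1-|g(z)|^2}.
\]
The claim is therefore equivalent to $\sum_j|\partial_j h(0)|\le 1$ for every holomorphic $h:\mathbb{D}^d\to\mathbb{D}$ with $h(0)=0$.

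\emph{Step 3: the origin case.} Write $a_j=\partial_jh(0)$ and choose unimodular constants $\theta_j$ with $\theta_j a_j=|a_j|$. Consider the one-variable function
\[
k(\zeta)=h(\theta_1\zeta,\dots,\theta_d\zeta),\qquad \zeta\in\mathbb{D}.
\]
This is a holomorphic map $\mathbb{D}\to\mathbb{D}$ with $k(0)=0$. Its derivative at the origin is $k'(0)=\sum_j\theta_j a_j=\sum_j|a_j|$. The classical Schwarz lemma gives $|k'(0)|\le1$, which is exactly the required bound.

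The only step needing care is the chain-rule bookkeeping in Step 2. One must check that the factor $\psi'(g(z))$ produces exactly $1/(1-|g(z)|^2)$, and that each $\phi_j'(0)$ produces exactly $1-|z_j|^2$. Once these factors are verified, Step 3 is immediate, because the diagonal slice with phases aligned to the partial derivatives turns the sum of absolute values into a single derivative.
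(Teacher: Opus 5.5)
Your proof is correct and complete. The derivative factors are right: $\phi_j'(0)=1-|z_j|^2$ and $\psi'(g(z))=1/(1-|g(z)|^2)$. The phase-aligned diagonal slice $\zeta\mapsto h(\theta_1\zeta,\dots,\theta_d\zeta)$ turns $\sum_j|\partial_j h(0)|$ into the single derivative $k'(0)$, and the one-variable Schwarz lemma then applies.

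The paper gives no proof of this lemma; it only cites Knese. So there is no argument in the paper to compare yours against, and yours is the standard reduction of the inequality to the one-variable Schwarz lemma.

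For the paper's only use of the lemma, with base point $0$ and $g(0)=f(0,0,0)=0$, your Steps 1 and 2 are not needed. Step 3 alone, applied to $g(z)=f(rz)$ (the Schwarz lemma only needs $|k|\le 1$), gives $r\sum_j|\partial_j f(0,0,0)|\le 1$.
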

	Let $r$ be as in Lemma \ref{Lem: Mainak} from now on. The following theorem provides us with the aimed family of tetrablock contractions.
	\begin{thm}\label{Thm: Mainak}
The triple $(T_1, T_2, T_3)$ is a tetrablock contraction having non-commuting fundamental operators whenever $0< | \lambda_j | < r$ for $j=1, 2, 3$ and $|\lambda_1| \neq |\lambda_2|$.
	\end{thm}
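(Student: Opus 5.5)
The plan has two independent parts: first show that $(T_1,T_2,T_3)$ is a tetrablock contraction using Lemma \ref{Lem: Mainak} and Lemma \ref{Lem: Knese}, then compute $F_1,F_2$ explicitly and check that they do not commute.

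\emph{Step 1: reduction to a $2\times 2$ contractivity condition.} Write $N=\begin{bmatrix}0&1\\0&0\end{bmatrix}$, so that $T_j=\lambda_j N$. Since $T_iT_j=0$ for all $i,j$, every polynomial $f\in\mathbb{C}[z_1,z_2,z_3]$ satisfies
\begin{align*}
f(T_1,T_2,T_3)=f(0)I+\Big(\sum_{j=1}^3\lambda_j\,\partial_j f(0)\Big)N=\begin{bmatrix} a & b\\ 0 & a\end{bmatrix},
\end{align*}
where $a=f(0)$ and $b=\sum_j\lambda_j\partial_jf(0)$. The Taylor joint spectrum is $\{(0,0,0)\}\subset\mathbb{E}$. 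Because $\overline{\mathbb{E}}$ is polynomially convex, it suffices to show $\|f(T)\|\le 1$ whenever $\|f\|_{\infty,\overline{\mathbb{E}}}\le1$. A standard computation (positivity of $I-M^*M$ for $M=\begin{bmatrix}a&b\\0&a\end{bmatrix}$, i.e.\ the determinant and trace conditions) shows that $\|M\|\le1$ if and only if $|a|\le 1$ and $|b|\le 1-|a|^2$. So the task reduces to proving $|b|\le 1-|f(0)|^2$.

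\emph{Step 2: the Schwarz-type estimate.} This is the key step. Define $g(z)=f(rz_1,rz_2,rz_3)$ on $\mathbb{D}^3$. By Lemma \ref{Lem: Mainak}, $r\overline{\mathbb{D}^3}\subset\mathbb{E}$, so $g$ maps $\mathbb{D}^3$ into $\overline{\mathbb{D}}$. Applying Lemma \ref{Lem: Knese} at the origin gives
\begin{align*}
r\sum_{j=1}^3|\partial_jf(0)|\le 1-|f(0)|^2.
\end{align*}
Since $|\lambda_j|<r$, we get $|b|\le\sum_j|\lambda_j||\partial_jf(0)|\le r\sum_j|\partial_jf(0)|\le 1-|a|^2$. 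Hence $(T_1,T_2,T_3)$ is a tetrablock contraction.

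\emph{Step 3: the fundamental operators.} We have $T_3^*T_3=\operatorname{diag}(0,|\lambda_3|^2)$, so $D_{T_3}=\operatorname{diag}(1,d)$ with $d=\sqrt{1-|\lambda_3|^2}>0$. In particular $D_{T_3}$ is invertible, $\mathcal{D}_{T_3}=\mathbb{C}^2$, and $F_j$ is forced to be $D_{T_3}^{-1}(\,\cdot\,)D_{T_3}^{-1}$ applied to the defining expression. Computing $T_2^*T_3=\operatorname{diag}(0,\bar\lambda_2\lambda_3)$ and similarly for $T_1^*T_3$ gives
\begin{align*}
F_1=\begin{bmatrix}0&\lambda_1/d\\0&-\bar\lambda_2\lambda_3/d^2\end{bmatrix},\qquad F_2=\begin{bmatrix}0&\lambda_2/d\\0&-\bar\lambda_1\lambda_3/d^2\end{bmatrix}.
\end{align*}
Multiplying out, the $(2,2)$ entries of $F_1F_2$ and $F_2F_1$ agree. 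The $(1,2)$ entry of $[F_1,F_2]$ equals $-\lambda_3(|\lambda_1|^2-|\lambda_2|^2)/d^3$, which is nonzero because $\lambda_3\ne0$ and $|\lambda_1|\ne|\lambda_2|$.

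The only genuinely nontrivial step is Step 2. Its content is the observation that Knese's inequality at the origin controls exactly the off-diagonal entry $b$, once the polydisk $r\overline{\mathbb{D}^3}\subset\mathbb{E}$ from Lemma \ref{Lem: Mainak} is used to pass from $\overline{\mathbb{E}}$ to $\mathbb{D}^3$. The remaining work is routine matrix computation.
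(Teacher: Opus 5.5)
Your proof is correct. Steps 1 and 3 coincide with the paper's; the difference lies in how you treat functions with $f(0,0,0)\neq 0$.

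The paper first restricts to $f(0,0,0)=0$. There $f(T_1,T_2,T_3)=bN$, and Lemma \ref{Lem: Knese} is used only in the normalized form $r\sum_j|\partial_j f(0,0,0)|\le 1$. For $\alpha=f(0,0,0)\neq 0$, the case $|\alpha|=1$ is trivial because $f$ is then constant. For $|\alpha|<1$, the paper composes with a Blaschke factor $B_\alpha$ and applies von Neumann's inequality to $B_\alpha^{-1}$ evaluated at the contraction $(B_\alpha\circ f)(T_1,T_2,T_3)$.

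You instead compute exactly when $aI+bN$ is a contraction, namely $|a|\le 1$ and $|b|\le 1-|a|^2$. You then observe that this is precisely what the full right-hand side $1-|f(0)|^2$ of Lemma \ref{Lem: Knese} delivers, so no case split or M\"obius normalization is needed.

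Your route is shorter, and it shows that the contractivity criterion and the Schwarz--Pick estimate at the origin match term by term. The paper's route never needs an explicit norm formula for $f(T_1,T_2,T_3)$. It relies only on the normalized estimate together with the M\"obius invariance of the problem, and that reduction transfers to tuples where $f(\boldsymbol{T})$ is not so easy to write down.
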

	\begin{proof}
			Let us recall that the Taylor joint spectrum for a tuple of commuting matrices are their joint eigenvalues. Also, $T_j$ is a nilpotent matrix and hence, $0$ is the only eigenvalue for each $j \in \{1, 2, 3\}$. Therefore, $(0, 0, 0)$ is the only joint eigenvalue with $(1, 0) \in \mathbb{C}^2$ as a joint eigenvector for the commuting triple $(T_1, T_2, T_3)$. Thus, the Taylor joint spectrum $\sigma(T_1, T_2, T_3) = \{ (0, 0 , 0)\} \subseteq \overline{\mathbb{E}}$. 
			
		Now, let $f$ be a function in $\mathcal{A}(\overline{\mathbb{E}})$ such that $\| f\|_{\infty, \overline{\mathbb{E}}} \leq 1$ and  $f(0, 0, 0)=0$.
		Now,
		\begin{align*}
			f(T_1, T_2, T_3) = \begin{bmatrix}
				0 & \sum_{j=1}^3 \lambda_j \frac{\partial f}{\partial z_j}(0, 0, 0) \\
				0 & 0
			\end{bmatrix}.
		\end{align*}
	For $r$ as in Lemma \ref{Lem: Mainak}, the function $g(z_1, z_2, z_3) = f(rz_1, rz_2, rz_3)$ is holomorphic and contractive on $\mathbb{D}^3$. Then, applying Lemma \ref{Lem: Knese} along with the chain rule, we get
	\begin{align}\label{Eq: Schwarz lemma}
	r	\sum_{j=1}^3  \left| \frac{\partial f}{\partial z_j}(0, 0, 0) \right| \leq 1- |f(0, 0, 0)|^2 =1.
	\end{align}
	
	Since $0< | \lambda_j | < r$ for $j=1, 2, 3$, the inequality \eqref{Eq: Schwarz lemma} yields
	\begin{align}
		\|	f(T_1, T_2, T_3) \| = \left| \sum_{j=1}^3 \lambda_j \frac{\partial f}{\partial z_j}(0, 0, 0) \right|  \leq 1.
	\end{align}
	
	If $f(0, 0, 0) = \alpha \neq 0$, then either $|\alpha|=1$ or $|\alpha|<1$. When $|\alpha | =1$, $f$ is a constant function and in that case we obviously have $\|	f(T_1, T_2, T_3) \| \leq 1$. Now let $|\alpha| < 1$. Then the function $B_\alpha \circ f $ is in $\mathcal{A}(\overline{\mathbb{E}})$ with $\| B_\alpha \circ f\|_{\infty, \overline{\mathbb{E}}} \leq 1$ and  $B_\alpha \circ f(0, 0, 0)=0$., where $B_\alpha$ is a Blaschke factor sending $\alpha$ to $0$.
	Thus, from our previous argument we conclude that 
	$$\|(B_\alpha \circ f)(T_1, T_2, T_3)\| \leq 1.$$
	Applying von-Neumann inequality for the single contraction $(B_\alpha \circ f)(T_1, T_2, T_3)$, we get 
	$$
	\|f(T_1, T_2, T_3)\| = \| B_\alpha^{-1} ((B_\alpha \circ f)(T_1, T_2, T_3))\| \leq 1.
	$$
	Hence, $(T_1, T_2, T_3)$ is a tetrablock contraction for $\lambda_1, \lambda_2, \lambda_3$ as specified above.

Furthermore, we compute the fundamental operators $F_1$ and $F_2$ of $(T_1, T_2, T_3)$. Here,
\begin{align*}
	D_{T_3} = (I- T_3^* T_3)^{1/2} = \begin{bmatrix}
		1 & 0 \\ 0 & (1-|\lambda_3|^2)^{1/2}
	\end{bmatrix}.
\end{align*}
	
	Therefore, 
	\begin{align*}
		 F_1= D_{T_3}^{-1}   (T_1 - T_2^*T_3)  D_{T_3}^{-1} = D_{T_3}^{-1} \begin{bmatrix}
			0 & \lambda_1 \\ 0 & - \bar{\lambda}_2 \lambda_3
		\end{bmatrix} D_{T_3}^{-1} = \begin{bmatrix}
		0 & \frac{\lambda_1}{\sqrt{1-|\lambda_3|^2}} \\ 0 & - \frac{\bar{\lambda}_2 \lambda_3}{1-|\lambda_3|^2}
		\end{bmatrix}
			\end{align*}
		and
			\begin{align*}
		 F_2= D_{T_3}^{-1}   (T_2 - T_1^*T_3)  D_{T_3}^{-1} = D_{T_3}^{-1} \begin{bmatrix}
			0 & \lambda_1 \\ 0 & - \bar{\lambda}_1 \lambda_3
		\end{bmatrix} D_{T_3}^{-1} = \begin{bmatrix}
		0 & \frac{\lambda_2}{\sqrt{1-|\lambda_3|^2}} \\ 0 & - \frac{\bar{\lambda}_1 \lambda_3}{1-|\lambda_3|^2}
		\end{bmatrix}
	\end{align*}
	
	Thus, 
	\begin{align*}
		F_1 F_2 = \begin{bmatrix}
			0 & - \frac{|\lambda_1|^2 \lambda_3}{(1-|\lambda_3|^2)^{3/2}} \\
			0 & \frac{\bar{\lambda}_1 \bar{\lambda}_2 \lambda_3^2}{(1-|\lambda_3|^2)^{2}} 
		\end{bmatrix} \ \text{ and } 
		F_2 F_1 =  \begin{bmatrix}
			0 & - \frac{|\lambda_2|^2 \lambda_3}{(1-|\lambda_3|^2)^{3/2}} \\
			0 & \frac{\bar{\lambda}_1 \bar{\lambda}_2 \lambda_3^2}{(1-|\lambda_3|^2)^{2}} 
		\end{bmatrix}
	\end{align*}
	Since $|\lambda_1| \neq |\lambda_2|$ and $\lambda_3 \neq 0$, we have $F_1 F_2 \neq F_2 F_1$.
		\end{proof}
	
	So, Theorem \ref{Thm: Mainak}, along with Corollary \ref{Cor:Main}, shows that the tetrablock contractions $(T_1, T_2, T_3)$ as described above have tetrablock unitary dilations, but the fundamental operators corresponding to each of them are non-commuting.
	
	\section{Concluding Remarks}
  Although the tetrablock contractions described in Theorem \ref{Thm: Mainak} have tetrablock unitary dilation, we do not have the explicit tetrablock isometries/unitaries that dilate them. The non-commutativity of the fundamental operators implies that the dilation tuple is not of the form \eqref{Eq:Dil-tuple} in Theorem \ref{Thm: B}. 
  
  In \cite{BB}, a more general construction of the dilation tuple has been provided, which encompasses a larger collection of tetrablock contractions by relaxing the condition $[F_1, F_1^*] = [F_2, F_2^*]$ of Theorem \ref{Thm: B}. However, the commutativity of $F_1$ and $F_2$ remains necessary to have such a dilation tuple.
  In this case, the dilation space is not the minimal dilation space of $T_3$. More precisely, in \eqref{Eq:Dil-tuple}, $\varphi_3(z)$ has been taken to be $I_{\mathcal{D}_{T_3}} z^2$ and hence, we must have $\varphi_1(z)= F_1 + \Xi z + F_2^* z^2$ and $\varphi_2(z)= F_2 + \Xi^*z + F_1^*z^2$, for some bounded operator $\Xi$ on $\mathcal{D}_{T_3}$ satisfying the conditions of Theorem 4.1 in \cite{BB}.

 Furthermore, in \eqref{Eq:Dil-tuple}, we can take $\varphi_3(z) =I_{\mathcal{D}_{T_3}} z^n$ and change the position of $D_{T_3}$ in the column operator $C_3$ to any position to obtain a different tetrablock isometric dilation. In such case, a modification of the proof of Theorem 4.1 in \cite{BB} will show that the symbols, $\varphi_1$ and $\varphi_2$ will become $\mathcal{B}(\mathcal{D}_{T_3})$-valued polynomials of degree at most $n$, whose coefficients will satisfy certain relations along with: $$\varphi_1(0)= F_1= \frac{1}{n!} \varphi_2^{(n)}(0)^* \text{ and } \varphi_2(0)= F_2= \frac{1}{n!} \varphi_1^{(n)}(0)^*.$$ Thus, if a tetrablock contraction has such a tetrablock isometric dilation, then the commutativity of $F_1$ and $F_2$ is necessary, as can be observed from the commutativity of $V_1$ and $V_2$.

 Till now, these are the known explicit constructions for the tetrablock isometric dilation of tetrablock contractions satisfying several conditions, including the commutativity of the fundamental operators. Therefore, an explicit construction of the tetrablock isometric dilations for the $2 \times 2$ tetrablock contractions produced in the previous section remains unknown because of non-commuting fundamental operators. Finding dilation tuples for these tetrablock contractions is worth exploring, and it might bring new ideas to tackle the long-standing rational dilation problem for the tetrablock. 
  
  \vspace{3mm}
  
 \noindent  \textbf{Acknowledgement:} The author thanks Prof. Tirthankar Bhattacharyya for carefully reading the initial draft of this note. The author is thankful for financial support from ANRF-NPDF (Grant No. PDF/2025/002836) and from the Indian Institute of Science, Bengaluru (Grant No. SP/DSTO-21-0263.26).
 He also thanks the anonymous referee for their comments.

\end{document}